\documentclass[11pt,reqno,a4paper]{amsproc}
\usepackage[T1]{fontenc}
\usepackage[utf8]{inputenc}
\usepackage{amsmath,amssymb,mathtools}
\usepackage[a4paper,left=20mm,right=20mm,top=22mm,bottom=22mm]{geometry}
\usepackage{microtype}
\usepackage[colorlinks=true,linkcolor=blue,citecolor=blue,urlcolor=blue]{hyperref}
\newtheorem{theorem}{Theorem}[section]
\newtheorem{maintheorem}{Theorem}

\newtheorem{maincorollary}[maintheorem]{Corollary}
\newtheorem{proposition}[theorem]{Proposition}
\newtheorem{lemma}[theorem]{Lemma}

\theoremstyle{definition}

\theoremstyle{remark}

\DeclareMathOperator{\rank}{rank}
\DeclareMathOperator{\Sym}{Sym}

\DeclareMathOperator{\fix}{fix}
\DeclareMathOperator{\tr}{tr}
\DeclareMathOperator{\CAT}{CAT}
\newcommand{\F}{\mathbb F}
\newcommand{\R}{\mathbb R}
\newcommand{\Z}{\mathbb Z}
\newcommand{\id}{\mathrm{id}}
\newcommand{\dH}{d_{\mathrm H}}
\newcommand{\rr}{\{0,\ldots,r-1\}}

\title{An example of a non-sofic hyperbolic group}
\author{Harjas Sandhu}
\address{Harjas Sandhu, The University of Queensland, St Lucia, QLD 4072, Australia}
\email{harjas.sandhu@student.uq.edu.au}
\author{Andreas Thom}
\address{Andreas Thom, Faculty of Mathematics, TU Dresden,
01062 Dresden, Germany}
\email{andreas.thom@tu-dresden.de}
\date{7 October 2026}
\subjclass[2020]{Primary 20F67; Secondary 20E26, 20F65, 05C38, 15A03}
\keywords{Sofic groups, hyperbolic groups, negatively curved complexes,
matrix rank, finite-field interpolation, graphs of large girth}

\begin{document}
\begin{abstract}
We construct a torsion-free, two-dimensional, word-hyperbolic group which is not sofic.
Our construction builds on OpenAI's construction of a non-residually finite
hyperbolic group \cite{OpenAI}.
\end{abstract}

\maketitle

\section{Introduction}

For a positive integer $n$, let $\Sym(n)$ denote the symmetric group on
$\{1,\ldots,n\}$. Its normalized Hamming metric and the fixed-point
fraction of a permutation are given by
\[
 \dH(\sigma,\rho)=\frac{|\{a:\sigma(a)\ne\rho(a)\}|}{n},
 \qquad
 \fix(\sigma)=\frac{|\{a:\sigma(a)=a\}|}{n}.
\]
We write $\id$ for the identity permutation. A group $G$ is sofic if, for every finite subset $F\subset G$ containing
$1$ and every $\eta>0$, there are a positive integer $n$ and a map
$\phi:F\to\Sym(n)$, sending $1$ to the identity permutation, such that
\[
 \dH(\phi(a)\phi(b),\phi(ab))<\eta\quad(a,b,ab\in F),
 \qquad
 \fix(\phi(a))<\eta\quad(a\in F\setminus\{1\}).
\]
Thus multiplication of elements in $F$ is preserved on most points of the finite set,
while each specified nonidentity element fixes only a small proportion
of the points. The notion was introduced by Gromov \cite{Gromov}; see
also Weiss \cite{Weiss}. All amenable groups and all residually finite
groups are sofic; see \cite{Pestov}. The question whether all groups are
sofic was open for a long time.

The first proof of the existence of a non-sofic group was given by
OpenAI \cite{OpenAISofic}. Kun and the second author \cite{KT} constructed non-sofic
generalized wreath products and doubles of residually finite groups.
Chapman and Peled \cite{CP} gave a conditional construction of a
non-sofic hyperbolic group using random simplicial complexes. The aim
of this paper is to construct a torsion-free word-hyperbolic group
which is not sofic. We build on OpenAI's construction of a non-residually
finite torsion-free hyperbolic group \cite{OpenAI}.
Recall that a finitely generated group is
word-hyperbolic if geodesic triangles in a Cayley graph for a finite
generating set are uniformly thin: each side lies within a fixed
distance of the other two sides, see \cite{BH} for details.

We obtain the group as the fundamental group of a finite complex.
By a triangle complex we mean a two-dimensional cell complex obtained
from a graph by attaching triangles, each side being mapped
homeomorphically onto an edge. Several edges may have the same
endpoints. In our construction, each closed face is embedded. We assign a length to every edge and give the faces the metrics of
triangles in the hyperbolic plane of curvature $-1$, with the prescribed
side lengths. The complex then carries the induced path metric.

A geodesic metric space is $\CAT(-1)$ if distances between points on
the sides of a geodesic triangle are at most the distances between the
corresponding points of a triangle in the hyperbolic plane with the same
side lengths. Corresponding points have the same distances from the
endpoints of the corresponding sides. The metric is locally $\CAT(-1)$
if each point has a neighbourhood which, with its intrinsic path metric,
is $\CAT(-1)$. We refer to \cite{BH} for these notions and their basic
properties. Our main result is the following.

\begin{maintheorem}\label{thm:main}
There exists a finite connected triangle complex $K$ with a
locally $\CAT(-1)$ metric such that $G=\pi_1(K)$ is torsion-free,
word-hyperbolic, and non-sofic.
\end{maintheorem}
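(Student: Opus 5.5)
The plan has three parts. First, build the complex $K$ so that $\pi_1(K)$ encodes a finite-field interpolation (rank) rigidity. Second, get hyperbolicity and torsion-freeness from the local $\CAT(-1)$ geometry by standard arguments. Third, show non-soficity by an approximation argument. Concretely, following the non-residually-finite construction of \cite{OpenAI}, I would fix a prime power $q$ and build $K$ from several subcomplexes glued along edge loops. Each subcomplex realises a relation pattern: a ``vertex group'' $V\cong\F_q^k$-like configuration, encoded by triangles whose links are graphs of large girth. The gluing is arranged so that certain words $w$ in the generators are forced to act, in any finite quotient, as if they satisfied a linear recurrence or interpolation identity over $\F_q$.

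The geometric part is routine given the design. I would assign equal edge lengths $\ell$ and make every face an equilateral hyperbolic triangle with angle $\alpha(\ell)$. By the link condition (Gromov; see \cite{BH}), $K$ is locally $\CAT(-1)$ as soon as every vertex link has girth $g$ with $g\,\alpha>2\pi$. Since the links are chosen to be graphs of large girth (say $g\ge 7$ with $\alpha$ close to $\pi/3$), this inequality can be met. The universal cover is then $\CAT(-1)$ by Cartan--Hadamard. Hence $G$ acts properly and cocompactly on a $\CAT(-1)$ space, so it is word-hyperbolic. It is torsion-free because finite groups acting on $\CAT(0)$ spaces have fixed points and the action is free.

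Non-soficity is the core step. I would argue by contradiction: assume sofic approximations $\phi:F\to\Sym(n)$ exist. I would then linearise them by passing to permutation matrices over $\F_q$, which act on $\F_q^n$. The relations of $K$ hold up to $\eta n$ coordinates, so they hold exactly on a subspace of codimension $O(\eta n)$. The rank computations then go as follows.
\begin{enumerate}
\item Interpolation: a polynomial of degree $<q$ vanishing on many points of $\F_q$ is zero.
\item The large-girth structure makes the almost-relations propagate into an almost-identity $\phi(w)\approx\id$ for a fixed nontrivial $w\in G$.
\item Concretely, the rank of $\phi(w)-I$ is at most $C\eta n$, so $\fix(\phi(w))\ge 1-C\eta$.
\end{enumerate}
This contradicts $\fix(\phi(w))<\eta$ once $\eta$ is small.

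I expect the main obstacle to be the stability of this rank argument. The residual-finiteness proof in \cite{OpenAI} uses exact relations in a finite quotient, and here each error must stay linear in $\eta n$ with a constant independent of $n$. First I would try to prove a ``rank-robust'' version of the interpolation lemma: if matrices $A_i$ satisfy the defining relations up to rank $\epsilon n$, then $w(A)-I$ has rank at most $C\epsilon n$. I would prove it by tracking each relation through a bounded number of substitutions, with the number of substitutions bounded by the size of $K$. I would also verify separately that $w\ne 1$ in $G$, using the $\CAT(-1)$ geometry: a closed local geodesic representing $w$ is nontrivial in $\pi_1$.
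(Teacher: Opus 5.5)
Your geometric part follows the same route as the paper: the link condition, Cartan--Hadamard, torsion-freeness from fixed points of finite groups, and nontriviality of $w$ via a closed local geodesic. The non-soficity step, which is the heart of the theorem, has a genuine gap.

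\textbf{The mechanism for step 2 cannot work.} Your step 2 is asserted rather than argued. Suppose tracking the defining relations through a bounded number of substitutions turned a fixed $w$ into $1$. The same derivation would then prove $w=1$ in $G$, contradicting the nontriviality you establish geometrically. Any such estimate must use finite dimension in an essential way. The paper does this by rank counting over $\F_q$, and it never forces a particular word to be nearly trivial. Three ingredients are missing from your outline.
\begin{itemize}
\item[(i)] The faces $L_{x,u,\alpha_x(i)+v}A_{i,u,v}R_{x,v,\beta_x(i)+u}^{-1}$ run over a full Cartesian product. Hence $Z_x=\sum_{i\in J(x)}Y_i$, with $Y_i=(A(i,u,v))_{u,v}$, differs by a matrix of rank at most $n\sum_{i,u,v}\delta_{x,i,u,v}$ from a product of an $rn\times n$ and an $n\times rn$ matrix. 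So $\rank Z_x\le n+n\sum_{i,u,v}\delta_{x,i,u,v}$.
\item[(ii)] Products of Lagrange polynomials on $S^5$ give rank-one matrices $H_x=f(x)f(x)^{\mathsf T}$ whose entries have degree $<q-1$ on every line. Their sums along the marked lines of $D(5,q)$ are therefore $-E_{ii}$ or $0$. The interpolation fact used is that such polynomials sum to zero over $\F_q$, not that a polynomial with many zeros vanishes. This yields $-\bigoplus_{i\in I}Y_i=\sum_xH_x\otimes Z_x$, so $\sum_{i\in I}\rank Y_i\le\sum_x\rank Z_x$.
\item[(iii)] In the other direction, the Alon--Hall--Knauer--Pinchasi--Yuster bound makes $\rank_\R Y_i$ large when the bipartite graph of $Y_i$ has few four-cycles. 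These four-cycles are exactly the fixed points of the rectangular words. Hadamard's inequality transfers the bound to $\rank_{\F_q}$.
\end{itemize}
Since $rs^5/4>q^5$, these combine to $q^5/5\le\sum_\tau\delta_\tau+\tfrac{r^2}{8}\sum_{i\in I}\lambda_i$. This only says the rectangular words fix a small proportion of points on average. That suffices, because a sofic approximation must make every $\delta_\tau$ and every $\lambda_i$ small at once. Nothing in your outline plays the role of (iii), and without it the almost-relations never conflict with the freeness requirement.

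\textbf{The metric does not fit the needed relations.} The construction of $K$ is left unspecified, and your equilateral metric is incompatible with the relation pattern that (i) requires. For fixed $x,u,v$, every pair $L_{x,u,j},R_{x,v,k}$ spans a corner at $O$. So the link at $O$ contains complete bipartite graphs and has girth $4$, and equilateral triangles with angles near $\pi/3$ violate the link condition there. The paper uses angles $\pi/2$ at $O$ and $\pi/5$ at $V$ and $W$, whose links immerse into the incidence graph $\Gamma\subset D(5,q)$ of girth at least ten. The same immersion puts distinct $A$-directions with the same index at link distance at least $2\pi$. This is what makes the rectangular loop a closed local geodesic in your final step.
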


Put $q=9\cdot10^{22}+1$. The presentation of $G$ in Section~\ref{sec:rank} has
\[
 \begin{aligned}
 10^{12}(q-1)q^4+6\cdot10^{17}q^5-2&<4\cdot10^{132}
 &&\text{generators},\\
 10^{12}(q-1)q^5-10^7(q-1)^5&<6\cdot10^{149}
 &&\text{relations}.
 \end{aligned}
\]

The group $G$ in Theorem~\ref{thm:main} admits no proper cocompact action
by cubical automorphisms on a $\CAT(0)$ cube complex.
Indeed, by Agol's theorem \cite[Theorem 1.1]{Agol} and the work of Haglund and
Wise \cite[Theorem 4.4]{HW}, a word-hyperbolic group admitting such an
action is residually finite and therefore sofic.

\begin{maincorollary}\label{cor:no-sofic-quotient}
There exists a nontrivial torsion-free word-hyperbolic group which has
no nontrivial sofic quotient.
\end{maincorollary}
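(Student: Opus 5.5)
The plan is to deduce Corollary~\ref{cor:no-sofic-quotient} from Theorem~\ref{thm:main} by a standard quotient trick. The key fact is that soficity passes to directed unions and to subgroups, and that an extension with amenable (or sofic) kernel can be exploited. The most direct route, however, uses the largest sofic quotient. Let $G=\pi_1(K)$ be the group of Theorem~\ref{thm:main}, and let $N$ be the intersection of all normal subgroups $M\trianglelefteq G$ with $G/M$ sofic. Then $G/N$ embeds in $\prod_M G/M$. A direct product of sofic groups is sofic, and subgroups of sofic groups are sofic. Since $G$ is finitely generated, only countably many such quotients matter up to reindexing, and soficity is a local property. So $G/N$ is sofic, and it is the largest sofic quotient of $G$. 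As $G$ is non-sofic, $N\neq 1$.

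Next I would find a torsion-free hyperbolic group with no nontrivial sofic quotient. The idea is to produce a group $H$ in which every nontrivial element is forced into the sofic residual. Pick $1\ne g\in N$; since $G$ is torsion-free, $g$ has infinite order.

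The cleanest construction is a Rips-type or small-cancellation perfect-ification of $G$ relative to $N$, but I would rather avoid heavy machinery. Instead I would use a free product with amalgamation that makes every generator conjugate into the residual. Choose a finite generating set $s_1,\dots,s_k$ of $G$. Form $H$ from $G$ by adding one stable letter for each $s_i$, glued so that a conjugate of $s_i$ becomes a word in $\langle g\rangle$-conjugates lying in $N$. Concretely, for each $i$ take an HNN extension identifying the maximal cyclic subgroup containing $s_i$ with the maximal cyclic subgroup containing a suitable element of $N$. I would choose this element, for instance, as $g^{h_i}$ for an $h_i$ making it non-commensurable with $s_i$.

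Maximal cyclic subgroups of a torsion-free hyperbolic group are malnormal. By the Bestvina--Feighn combination theorem (or a Gersten-type criterion for HNN extensions along malnormal cyclic subgroups that are not conjugate into each other's commensurators), $H$ is hyperbolic, and it is torsion-free as an HNN extension of a torsion-free group. Now let $H\to Q$ be a sofic quotient. Its restriction to $G$ has sofic image, so it kills $N$. It therefore kills the elements of $N$ that are conjugate in $H$ to the roots of $s_i$. Hence it kills every $s_i$, so it kills $G$. It then kills the stable letters' defining relations only partially: $H/\langle\langle G\rangle\rangle$ is free on the stable letters, which is sofic and nontrivial. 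So I must also make the stable letters die. To do this I would additionally take each stable letter $t_i$ to be conjugate into $G$ by a further HNN or amalgam step. A cleaner alternative is to replace HNN extensions by amalgamated products $G *_{\langle s_i'\rangle=\langle n_i\rangle} G$ built as a double, arranged so that no free letters survive.

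The main obstacle is exactly this bookkeeping: arranging the gluing so that hyperbolicity (malnormality and the non-conjugacy hypotheses of the combination theorem) holds while every generator of the final group is killed in each sofic quotient. I would first try the amalgam $H=G*_{C}G'$, where $G'$ is a copy of $G$. Here $C$ identifies a maximal cyclic subgroup $\langle n\rangle\le N$ of $G$ with the maximal cyclic subgroup of $G'$ generated by a single element $w'$, and $w'$ is chosen to normally generate $G'$ modulo some torsion-free hyperbolic quotient, iterating as necessary. The existence of such $w'$ (for instance by taking $G'$ to be the free-by-relator trick, with $G'$ a one-relator torsion-free hyperbolic group whose normal closure of one element is everything) is the step I expect to need the most care.
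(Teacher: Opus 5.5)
Your first step is correct: $G/N$ is residually sofic, hence sofic (the Elek--Szab\'o fact that the paper also uses), so the sofic residual $N$ is nontrivial. The gap is the second step, which is the whole content of the corollary. The difficulty there is not bookkeeping: the enlargements you propose provably keep a nontrivial sofic quotient.

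Any HNN extension surjects onto $\Z$: send the base to $0$ and the stable letter to $1$. So no sequence of HNN steps can work, as you partly noticed. For the amalgam $H=G*_{\langle n\rangle=\langle w'\rangle}G'$ with $n\in N$, define $H\to G/N$ by the quotient map on $G$ and the trivial map on $G'$. This is compatible on the amalgamated subgroup precisely because $n\in N$. Hence $H$ maps onto the sofic group $G/N$, whatever $G'$ and $w'$ are. That quotient is nontrivial unless $G$ itself already has no nontrivial sofic quotient. Gluing along elements of $N$ only kills the \emph{new} factors, never $G$, because $G/N$ always survives.

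A smaller point: the generator of the maximal cyclic subgroup containing an element of $N$ need not lie in $N$, since $G/N$ may have torsion. Your claim that the roots of the $s_i$ are killed would therefore also need an element of $N$ that is not a proper power in $G$.

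The missing idea is to pass to a hyperbolic \emph{quotient} of $G$ rather than an enlargement. Suppose $\pi\colon G\to\bar G$ is a surjection onto a nontrivial torsion-free hyperbolic group with $\pi(N)=\bar G$. Then every sofic quotient of $\bar G$ is a sofic quotient of $G$, so it kills $N$ and is therefore trivial. Producing such a $\bar G$ requires small cancellation over hyperbolic groups, which is the route you set aside. It is available here (Olshanskii's theory of $G$-subgroups) because $N$ is an infinite normal, hence non-elementary, subgroup of the non-elementary group $G$.

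The paper does not build this group by hand. It argues by contradiction, applying Theorem~3.4 of Kapovich--Wise to the class of sofic groups. That theorem turns ``every nontrivial torsion-free hyperbolic group has a nontrivial sofic quotient'' into ``every torsion-free hyperbolic group is residually sofic''. Elek--Szab\'o and Theorem~\ref{thm:main} then give the contradiction. To complete your proposal you must either cite that theorem or supply the small-cancellation quotient; the combination-theorem constructions cannot replace it.
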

\begin{proof}
If every nontrivial torsion-free word-hyperbolic group had a nontrivial
sofic quotient, the theorem of Kapovich and Wise
\cite[Theorem 3.4]{KW}, applied to the class of sofic groups, would
imply that every torsion-free word-hyperbolic group is residually sofic
and hence sofic by \cite{ES}. This contradicts Theorem~\ref{thm:main}.
\end{proof}

The construction starts with a family of affine lines in $\F_q^5$,
where $\F_q$ is the field with $q$ elements and $q$ is prime. Its
incidence graph has one vertex for each point
and each line, and an edge whenever the point lies on the line. This
is the graph $D(5,q)$ of Lazebnik and Ustimenko \cite{LU}. Its girth,
the length of its shortest simple cycle, is at least ten
\cite[Proposition 2.1(iii)]{LUW}. In Section~\ref{sec:lines}, we select
a family of lines with distinct distinguished points, called marks,
and delete the incidence with its own mark for each line in a specified
subfamily.

The retained incidences determine a triangle complex with three vertices.
Each retained line gives an array of edges between two of these vertices.
Choosing two rows and two columns gives a closed four-edge path,
traversed with alternating orientations.

In Section~\ref{sec:rank}, we combine polynomial interpolation with the
rank estimate of Alon--Hall--Knauer--Pinchasi--Yuster
\cite[Theorem 6]{AHKPY}. We prove that every assignment of permutations
to the oriented edges has either a triangular boundary word moving a
fixed positive fraction of points or one of the four-edge words fixing
such a fraction. The bound is independent of the degree of the permutations.

In Section~\ref{sec:geometry}, we use the girth bound to construct a
locally $\CAT(-1)$ metric and prove that the four-edge paths are
non-nullhomotopic. Soficity would allow both fractions in the preceding
paragraph to be arbitrarily small. Section~\ref{sec:sofic} derives the
contradiction.

\subsection*{Acknowledgments}
This paper is based on an AI-generated construction obtained by Harjas
Sandhu on October 7, 2026, which was in turn based on the OpenAI breakthrough \cite{OpenAI}. The second author worked through the proof to make it readable
and to reduce unnecessary overhead. OpenAI's GPT Astra helped draft this document. We simplified various constructions and related them to previous work. The aim of this note is to give a reasonably self-contained presentation of the result. An independent AI-assisted proof was given by Douglas--Mghirbi \cite{DM}.

From the construction in \cite{OpenAI} we use the marked lines,
triangular relations associated to Cartesian products, interpolation
identity, and comparison of real and finite-field ranks. We prove these ingredients
and combine them with the rank theorem and the incidence
relations of $D(5,q)$ to obtain the inequality for approximate
permutation assignments. We thank Ian Agol for pointing out the relevance of Dvir's work \cite{Dvir} to the crucial interpolation argument, see Lemma \ref{lem:interpolation}. We view this document as a commentary on OpenAI 252 \cite{OpenAI}.

\section{Marked lines from $D(5,q)$}\label{sec:lines}

We fix
\[
 r=10^6,\qquad h=3\cdot10^{11},\qquad
 q=h^2+1=9\cdot10^{22}+1,\qquad s=\frac{q-1}{10}.
\]
The integer $q$ is prime\footnote{We have $q-1=2^{22}3^25^{22}$.
Direct modular exponentiation gives
$11^{q-1}\equiv1\pmod q$ and
$\gcd(11^{(q-1)/p}-1,q)=1$ for $p\in\{2,3,5\}$.
These identities force $11$ to have order $q-1$ modulo every prime
divisor $\ell$ of $q$. Hence $q-1$ divides $\ell-1$, so $\ell=q$.}.
We consider two copies of $\F_q^5$, points and labels. We write points as $(t,y,z,w,v)$
and labels as $[a,b,c,d,e]$. The incidence relation between points and labels is defined by
\begin{equation}\label{eq:incidence}
 b-y=at,\qquad c-z=bt,\qquad d-w=ay,\qquad e-v=az.
\end{equation}
These equations define a bipartite graph $D(5,q)$ as in
\cite[Section 2]{LUW}.
By \cite[Proposition 2.1]{LUW}, this graph is $q$-regular, has $q^5$
vertices in each part, and has girth at least ten. These properties also
follow from \cite[Proposition 2.1 and Theorem 3.3]{LU}.

For a fixed label, the incident points form the full affine line
\begin{equation}\label{eq:affineline}
 \ell_{[a,b,c,d,e]}=
 \{(t,b-at,c-bt,d-ab+a^2t,e-ac+abt):t\in\F_q\}.
\end{equation}
Different labels determine different lines. Indeed, the normalized direction
$(1,-a,-b,a^2,ab)$ determines $a,b$, and the point with first coordinate
zero then determines $c,d,e$.

Consider the set of incidence edges satisfying $a=t$. Given a point
$(t,y,z,w,v)$, the condition $a=t$ and the incidence equations uniquely
determine a label by
\[
 a=t,\qquad b=y+t^2,\qquad c=z+bt,\qquad d=w+ty,\qquad e=v+tz.
\]
Thus each point is contained in exactly one of these incidence edges.
Conversely, for a fixed label $[a,b,c,d,e]$, the parametrization
\eqref{eq:affineline} gives exactly one incident point with first
coordinate $t=a$. Thus each label is also contained in exactly one of
these incidence edges. These incidence edges therefore form a perfect matching.
Since each matched pair satisfies \eqref{eq:incidence}, the point matched
to a label lies on the corresponding affine line.
We henceforth index each line by its matched point, writing $\ell_i$
for the line marked at $i\in\F_q^5$.

Put $S=\{0,\ldots,s-1\}\subset\F_q$, using the standard integer
representatives, and set
\[
 I=S^5,\qquad J=(\F_q\setminus\{-1\})\times\F_q^4.
\]
We retain all lines $\ell_i$ with $i\in J$ and distinguish those with
$i\in I$. For each distinguished line, we delete only its incidence
with its own mark. All incidences of the other retained lines remain.
Thus, for $x\in\F_q^5$, the remaining incidences at $x$ are indexed by
\begin{equation}\label{eq:puncture}
 J(x)=\{i\in I:x\in\ell_i\setminus\{i\}\}
 \cup\{i\in J\setminus I:x\in\ell_i\}.
\end{equation}
The bipartite graph $\Gamma$ on $\F_q^5\sqcup J$, whose two parts
represent points and retained lines, has an edge from $x$ to $i$
precisely when $i\in J(x)$. It is a subgraph of $D(5,q)$ and therefore
has girth at least ten.

For a fixed point and a prescribed first label coordinate $a$,
\eqref{eq:incidence} uniquely determines $b,c,d,e$. The first coordinate
of the matched point is $a$, so the retained lines are precisely those
with $a\ne-1$. Thus $q-1$ retained lines pass through each point before deletion. A point loses one incidence if it
belongs to $I$, and none otherwise. Consequently,
\[
 |J(x)|=
 \begin{cases}
  q-2=(h-1)(h+1),&x\in I,\\
  q-1=h\cdot h,&x\notin I.
 \end{cases}
\]
For each point $x$, put $(s_x,t_x)=(h-1,h+1)$ if $x\in I$ and
$(s_x,t_x)=(h,h)$ otherwise. All these factors are at least $r$.
Choose a bijection
\begin{equation}\label{eq:block}
 J(x)\longrightarrow (\Z/s_x\Z)\times(\Z/t_x\Z),
 \qquad i\longmapsto(\alpha_x(i),\beta_x(i)).
\end{equation}
The sets $J(x)$ may
overlap, but their bijections are chosen separately for each point.

The following interpolation argument is adapted from
\cite[Section 2.5, equation (2.13)]{OpenAI} to the grid $S^5$. It uses low-degree polynomials on affine
lines, as in Dvir's proof of the finite-field Kakeya conjecture
\cite{Dvir}, applied here to the entries of rank-one matrices.

\begin{lemma}\label{lem:interpolation}
There are matrices $H_x\in M_I(\F_q)$, indexed by
$x\in\F_q^5$, such that
\begin{equation}\label{eq:certificate}
 \rank H_x\le1,\qquad
 \sum_{x:\,i\in J(x)}H_x=
 \begin{cases}
  -E_{ii},&i\in I,\\
  0,&i\in J\setminus I.
 \end{cases}
\end{equation}
\end{lemma}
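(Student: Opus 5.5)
The plan is to take every $H_x$ to be the outer product of a single vector-valued polynomial with itself, evaluated at $x$. Then the sums in \eqref{eq:certificate} become sums of low-degree polynomials along affine lines, and these vanish over a whole line. This is the Dvir-type interpolation mentioned before the lemma.

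\emph{Step 1 (interpolation vectors).} For $j\in S$ let $\lambda_j\in\F_q[T]$ be the Lagrange polynomial of degree $s-1$ with $\lambda_j(k)=\delta_{jk}$ for $k\in S$. Since $s<q$, the elements of $S$ are distinct in $\F_q$, so these polynomials exist. For $j=(j_1,\ldots,j_5)\in I=S^5$ put
\[
 L_j(x)=\prod_{m=1}^5\lambda_{j_m}(x_m),
\]
a polynomial on $\F_q^5$ of total degree at most $5(s-1)$. Define $u(x)=(L_j(x))_{j\in I}\in\F_q^I$ and $H_x=u(x)u(x)^{T}$. Then $\rank H_x\le1$. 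Moreover, for $i\in I$ we have $u(i)=e_i$, hence $H_i=E_{ii}$.

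\emph{Step 2 (vanishing along full lines).} Each entry $(H_x)_{jk}=L_j(x)L_k(x)$ has total degree at most $10(s-1)=q-11<q-1$. By \eqref{eq:affineline}, every coordinate of a point of $\ell_i$ is a polynomial of degree at most $1$ in the parameter $t$. Restricting an entry to $\ell_i$ therefore gives a polynomial $p(t)$ of degree less than $q-1$. We use the standard fact that $\sum_{t\in\F_q}t^k=0$ for $0\le k<q-1$; the case $k=0$ holds because $q\cdot1=0$. Hence $\sum_{x\in\ell_i}H_x=0$ for every line $\ell_i$, and the parametrization is bijective onto $\ell_i$, so no point is counted twice.

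\emph{Step 3 (reading off \eqref{eq:certificate}).} By \eqref{eq:puncture}, for $i\in J\setminus I$ the set $\{x: i\in J(x)\}$ is the whole line $\ell_i$, so the sum is $0$. For $i\in I$ it is $\ell_i\setminus\{i\}$, since the mark $i$ lies on $\ell_i$ by the matching construction. The sum is then $0-H_i=-E_{ii}$. Lines with $i\notin J$ do not enter the statement.

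The only delicate point is the degree count $10(s-1)<q-1$. This is exactly why $s=(q-1)/10$ was chosen, and it leaves a small margin. Everything else is formal.
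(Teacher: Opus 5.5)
Your proposal is correct and follows the paper's proof essentially verbatim. It uses the same products of Lagrange basis polynomials on $S^5$, the rank-one matrices $H_x=u(x)u(x)^{\mathsf T}$ with $H_i=E_{ii}$, the degree bound $10(s-1)=q-11<q-1$ forcing each full-line sum to vanish, and removal of the mark for $i\in I$, while spelling out the $k=0$ power-sum case and the bijectivity of the line parametrization that the paper leaves implicit.
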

\begin{proof}
For $c\in S$, let $p_c$ be the Lagrange basis polynomial associated with
the node $c$ in $S$, given by
\[
 p_c(T)=\prod_{d\in S\setminus\{c\}}\frac{T-d}{c-d}.
\]
It is the unique polynomial of degree at most $s-1$ satisfying
\[
 p_c(d)=
 \begin{cases}
  1,&d=c,\\
  0,&d\in S\setminus\{c\}.
 \end{cases}
\]
For $i=(c_{i1},\ldots,c_{i5})\in I$ and $X=(X_1,\ldots,X_5)$, set
\[
 f_i(X)=\prod_{j=1}^5p_{c_{ij}}(X_j),\qquad
 f=(f_i)_{i\in I},\qquad H_x=f(x)f(x)^{\mathsf T}.
\]
We have $\rank H_x\le1$. For $i,i'\in I$, $f_i(i')$ equals $1$ if $i=i'$
and $0$ otherwise, so $H_i=E_{ii}$ for $i\in I$. Each $f_i$ has total
degree at most $5(s-1)$. It follows that every entry of $ff^{\mathsf T}$,
restricted to an affine line, is a polynomial of degree at most
$10(s-1)=q-11<q-1$.

It is elementary to see that the sum of the values of a polynomial of degree less than
$q-1$ over $\F_q$ is zero. Applying this observation entrywise gives
\[
 \sum_{x\in\ell_i}H_x=0\qquad(i\in J).
\]
For $i\in I$, deleting the term at its mark gives $-H_i=-E_{ii}$.
For $i\in J\setminus I$, the full line is retained, so its sum remains
zero. This proves \eqref{eq:certificate}.
\end{proof}

\section{A finite permutation obstruction}\label{sec:rank}

Our next aim is to define a connected triangle complex whose fundamental group will have all the required properties. We use the construction in Section~\ref{sec:lines} and write $[r]=\rr$.
We follow the construction in \cite[Section 2.3]{OpenAI}.
Consider three vertices $O,V,W$ and define oriented edges
\[
 A_{i,u,v}:V\longrightarrow W,\qquad
 L_{x,u,j}:O\longrightarrow V,\qquad R_{x,v,k}:O\longrightarrow W.
\]
Here $i\in J$, $u,v\in[r]$, $x\in\F_q^5$, $j\in\Z/s_x\Z$, and
$k\in\Z/t_x\Z$. For each point $x$, $i\in J(x)$, and $u,v\in[r]$,
attach a triangle with boundary, read in path order,
\begin{equation}\label{eq:face}
 L_{x,u,\alpha_x(i)+v}\,A_{i,u,v}\,R_{x,v,\beta_x(i)+u}^{-1}.
\end{equation}
Denote the resulting finite triangle cell complex by $K$. There are
$L$- and $R$-edges, and hence $K$ is connected.

Let $G=\pi_1(K,O)$. Choose a point $x_0\in\F_q^5$. The edges
$c_V:=L_{x_0,0,0}$ and $c_W:=R_{x_0,0,0}$ form a spanning tree of the graph of
vertices and edges of $K$. Let $c_O$ be the constant path at $O$.
For an oriented edge $e:A\to B$, define
\begin{equation}\label{eq:edgeelement}
 g_e=[c_Aec_B^{-1}]\in G.
\end{equation}
Denote the elements associated with $A_{i,u,v}$, $L_{x,u,j}$, and
$R_{x,v,k}$ by $a(i,u,v)$, $l(x,u,j)$, and $r(x,v,k)$, respectively.
We multiply loops in path order. Collapsing the spanning tree gives a
finite presentation of $G$ with generators
$
 a(i,u,v), l(x,u,j), r(x,v,k),
$
with the index ranges specified above, and relations
\begin{equation}\label{eq:relation}
 l(x,u,\alpha_x(i)+v)a(i,u,v)
 r(x,v,\beta_x(i)+u)^{-1}=1
\end{equation}
for every point $x$, $i\in J(x)$, and $u,v\in[r]$, together with
\[
 l(x_0,0,0)=1,\qquad r(x_0,0,0)=1.
\]

Our next aim is to understand how this presentation interacts with the assignment of permutations to the generators. We identify a permutation $\sigma\in\Sym(n)$ with
the matrix $P_\sigma$ defined by $P_\sigma e_a=e_{\sigma(a)}$, where
$e_a$ is the $a$-th standard coordinate vector. Thus inverse matrices
are transposes, and $\tr(P_\sigma)=n\fix(\sigma)$.
Assign arbitrary permutations of degree $n$ to the oriented edges of $K$.
Denote the permutations assigned to $A_{i,u,v}$, $L_{x,u,j}$, and
$R_{x,v,k}$ by $A(i,u,v)$, $L(x,u,j)$, and $R(x,v,k)$, respectively.
Reversing an edge replaces its permutation by its
inverse. For a triangular face indexed by $\tau=(x,i,u,v)$, define
\begin{equation}\label{eq:triangledefect}
 \delta_{x,i,u,v}=
 \dH\bigl(L(x,u,\alpha_x(i)+v)A(i,u,v)
 R(x,v,\beta_x(i)+u)^{-1},\id\bigr).
\end{equation}
This is exactly the defect of the corresponding relation being satisfied in the permutation group.
For $i\in I$, $u\ne u'$ and $v\ne v'$, define the rectangular product
\begin{equation}\label{eq:rectangle}
 Q_i(u,u',v,v')=A(i,u,v)A(i,u',v)^{-1}A(i,u',v')A(i,u,v')^{-1}.
\end{equation}
Let $\lambda_i$ be the average of $\fix(Q_i(u,u',v,v'))$ over
all $r^2(r-1)^2$ ordered tuples $(u,u',v,v')$ with $u\ne u'$ and $v\ne v'$.

The key technical result is the following theorem. It gives a lower bound on the sum of defects of the defining relations together with a measure of non-triviality of the rectangular words $w_{i;u,u',v,v'}:=
 a(i,u,v)a(i,u',v)^{-1}a(i,u',v')a(i,u,v')^{-1}$. We will later show that all these rectangular words are non-trivial in $G$.

\begin{theorem}\label{thm:obstruction}
Every such assignment, in every degree $n\ge1$, satisfies
\begin{equation}\label{eq:linear}
 \frac{q^5}{5}\le\sum_\tau\delta_\tau+\frac{r^2}{8}\sum_{i\in I}\lambda_i.
\end{equation}
\end{theorem}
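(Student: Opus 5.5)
The plan is a rank double-count. An upper bound will come from the rank-one certificate of Lemma~\ref{lem:interpolation} together with the triangular relations. A lower bound will come from the rectangular words via the rank theorem of \cite[Theorem 6]{AHKPY}.

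\textbf{Step 1: the block matrices.} For each $i\in J$, form the $rn\times rn$ block matrix $M_i=(P_{A(i,u,v)})_{u,v\in[r]}$. Consider $B=\bigoplus_{i\in I}M_i$, that is, $\sum_{i\in I}E_{ii}\otimes M_i$, viewed over $\F_q$. By \eqref{eq:certificate},
\[
 -B=\sum_{x}\sum_{i,i'\in J(x)}(H_x)_{ii'}\,(\text{block built from the $A(i,\cdot,\cdot)$}),
\]
where the identity is first written entrywise and then regrouped by points $x$. The terms with $i\in J\setminus I$ cancel, because their certificate sums vanish.

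\textbf{Step 2: the upper bound.} When the defects are zero, \eqref{eq:relation} gives
\[
 P_{A(i,u,v)}=P_{L(x,u,\alpha_x(i)+v)}^{\mathsf T}\,P_{R(x,v,\beta_x(i)+u)}.
\]
Through the Cartesian product structure \eqref{eq:block}, the contribution of a single $x$ is then a product of an ``$L$-part'' and an ``$R$-part''. Its inner dimension should be $n$ times a quantity independent of $i$, because the shifts by $\alpha_x(i)$ and $\beta_x(i)$ merely permute the indices in $\Z/s_x\Z\times\Z/t_x\Z$. Combined with $\rank H_x\le1$, I expect each point to contribute rank at most about $nr$, or possibly $n\cdot O(1)$ after the correct regrouping. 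Summed over the $q^5$ points, this bounds $\rank_{\F_q}B$.

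For nonzero defects, replace each $A(i,u,v)$ by the exact product. The difference of two permutation matrices $P_\sigma-P_\rho$ has rank at most $n\,\dH(\sigma,\rho)$. Summed over faces, the error adds at most $n\sum_\tau\delta_\tau$ to the rank, up to a constant factor.

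\textbf{Step 3: the lower bound.} For each $i\in I$, I would show $\rank M_i\gtrsim n\min\bigl(r/4,\,1/(2\lambda_i)\bigr)$. Over $\R$ this would follow from the bound $\rank X\ge(\tr X)^2/\tr(X^2)$ applied to $X=M_iM_i^{\mathsf T}$. Here $\tr X=r^2n$. Expanding $\tr(X^2)$ gives the sum of $\tr P_{Q_i(u,u',v,v')}$ over all tuples, which equals $n\,r^2(r-1)^2\lambda_i$ for the nondegenerate tuples, plus at most $2r^3n$ from the degenerate tuples with $u=u'$ or $v=v'$. Rearranging this to the linear form $\mathrm{rank}\ge rn\cdot(\text{const})-\tfrac{r^2}{8}n\lambda_i$ should produce the $\frac{r^2}{8}\lambda_i$ term.

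\textbf{Step 4: combining.} Summing Step 3 over the $|I|=s^5=q^5/10^5$ indices gives roughly $10^{-5}q^5\cdot rn/4$, which is of order $q^5n$. Comparing with Step 2 and dividing by $n$ should yield \eqref{eq:linear}. The specific constants $r=10^6$ and $s=(q-1)/10$ are presumably chosen so that the margin is exactly $q^5/5$.

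\textbf{Main obstacle: the field mismatch.} The certificate lives over $\F_q$, while the Frobenius/trace argument of Step 3 is real. For $0/1$ matrices we only have $\rank_{\F_q}\le\rank_\R$, which is the wrong direction for Step 3. I would first try to use \cite[Theorem 6]{AHKPY} directly. I expect it to give a lower bound on rank over an arbitrary field for a matrix with nonzero diagonal and a controlled number of nonzero off-diagonal coincidences. I would apply it to $M_iM_i^{\mathsf T}$, or to a suitable $0/1$ derived matrix, where the off-diagonal coincidences are counted exactly by fixed points of the $Q_i$.

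If that fails, I would try lifting the certificate. One route is to write each $H_x=f(x)f(x)^{\mathsf T}$ with integer lifts and control the resulting error modulo $q$, since entries are bounded by $n$ and $q$ is huge. This is the comparison of real and finite-field ranks mentioned in the introduction.
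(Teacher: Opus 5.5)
Your Step 3 over $\R$ is correct. With $X=M_iM_i^{\mathsf T}$ and $\tr(X^2)=(2r^3-r^2)n+nr^2(r-1)^2\lambda_i$, the inequality $\rank X\ge(\tr X)^2/\tr(X^2)$ is exactly the case of \cite[Theorem 6]{AHKPY} the paper uses, and it gives $\rank_\R M_i\ge r^2n/(2r-1+(r-1)^2\lambda_i)$. The other two parts of the double count, however, have genuine gaps.

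\textbf{Upper bound.} Your Step 1 identity indexes $H_x$ by pairs of lines through $x$, but $H_x$ is indexed by $I\times I$. Exchanging the sums and using \eqref{eq:certificate} gives the correct identity
\[
-\bigoplus_{i\in I}M_i=\sum_xH_x\otimes Z_x,\qquad Z_x=\sum_{i\in J(x)}M_i,
\]
where $Z_x$ is summed over \emph{all} retained lines through $x$. Then $\rank(H_x\otimes Z_x)\le\rank Z_x$.

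What you need is $\rank Z_x\le n+n\sum_{i,u,v}\delta_{x,i,u,v}$. Your option ``about $nr$'' per point is fatal. The lower side is only about $rs^5n/4\approx2.5\,q^5n$, while $q^5\cdot rn$ exceeds it by a factor of order $r$.

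The missing observation is this. For fixed $(u,v)$, the map $i\mapsto(\alpha_x(i)+v,\beta_x(i)+u)$ is a bijection from $J(x)$ onto $\Z/s_x\Z\times\Z/t_x\Z$. So after replacing each $A(i,u,v)$ by $L(x,u,\alpha_x(i)+v)^{-1}R(x,v,\beta_x(i)+u)$, the $(u,v)$ block of $Z_x$ becomes $\bigl(\sum_jL(x,u,j)^{-1}\bigr)\bigl(\sum_kR(x,v,k)\bigr)$. The whole corrected matrix is therefore an $rn\times n$ matrix times an $n\times rn$ matrix, of rank at most $n$. Each replacement costs rank at most $n\delta_\tau$, and each face lies at exactly one point. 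Hence the defects enter with coefficient exactly $1$, not ``up to a constant factor''.

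\textbf{Field mismatch.} You located the obstacle correctly, but neither proposed fix works. \cite[Theorem 6]{AHKPY} is a statement about real rank; its proof is the eigenvalue/Cauchy--Schwarz argument you gave, which has no analogue over $\F_q$.

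Lifting the certificate is impossible. The identity $\sum_{x\in\ell_i}H_x=0$ is the vanishing of $\sum_{t\in\F_q}g(t)$ for a polynomial $g$ of degree less than $q-1$, and this holds only modulo $q$. An argument using that entries are bounded by $n$ while $q$ is huge cannot work either, because $n$ is arbitrary and $q$ is fixed.

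The missing ingredient is an $n$-independent comparison applied to $M_i$ itself: if an integer matrix has row norms at most $\sqrt r$ and $q>r$ is prime, then $\rank_{\F_q}\ge\frac12\rank_\R$. To prove it:
\begin{itemize}
\item Take a nonsingular $d\times d$ minor $B$ with $d=\rank_\R$, and let $d'$ be its $\F_q$-rank.
\item Integer row operations make $d-d'$ rows divisible by $q$, so $q^{d-d'}$ divides $\det B$.
\item Hadamard's inequality gives $1\le|\det B|\le r^{d/2}<q^{d/2}$, hence $d'\ge d/2$.
\end{itemize}
Each row of $M_i$ has exactly $r$ ones, so this applies. The factor $\tfrac12$ turns the linearized real bound $n\bigl(\tfrac r2-\tfrac{r^2\lambda_i}{4}\bigr)$ into $n\bigl(\tfrac r4-\tfrac{r^2\lambda_i}{8}\bigr)$, which is where the coefficient $\tfrac{r^2}{8}$ in \eqref{eq:linear} comes from. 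Summing over $I$ and comparing with the upper bound gives $\tfrac{rs^5}{4}-q^5\le\sum_\tau\delta_\tau+\tfrac{r^2}{8}\sum_{i\in I}\lambda_i$. Finally, $\tfrac{rs^5}{4}-q^5>\tfrac{q^5}{5}$.
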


The proof of this theorem occupies the remainder of this section and needs various lemmas.
The following result appears in \cite[Lemma 2.4]{OpenAI}.
It is also a consequence of Gabber's bound on torsion in
cokernels; see \cite[Lemma 1]{Soule} and \cite[Lemma 3.1]{Sauer}.
We include a direct proof.

\begin{lemma}\label{lem:reduction}
If an integer matrix $M$ has row norms at most $\sqrt r$ and $q>r$ is
prime, then $$\rank_{\F_q}M\ge\tfrac12\rank_\R M.$$
\end{lemma}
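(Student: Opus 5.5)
Let $\rho=\rank_\R M$. The plan is to pick a maximal nonsingular real minor, compare its determinant with its determinant modulo $q$, and use Hadamard's inequality to bound the size of that determinant.

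First choose $\rho$ linearly independent rows of $M$ over $\R$. They form a $\rho\times N$ integer matrix $M'$ of full row rank, so some $\rho\times\rho$ submatrix $B$ of $M'$ has $\det B\neq0$. The rows of $B$ are truncations of rows of $M$, so they still have Euclidean norm at most $\sqrt r$. Hadamard's inequality then gives $|\det B|\le r^{\rho/2}$. By itself this does not show that $\det B$ is nonzero mod $q$, since $r^{\rho/2}$ may exceed $q$. So rather than one large minor we look for a large nonsingular minor mod $q$.

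The cleaner route is to work with $C=B\bmod q$ over $\F_q$ and let $k=\rank_{\F_q}C\le\rank_{\F_q}M$. We bound $\rho$ in terms of $k$ by showing that $q^{\rho-k}$ divides $\det B$. Over $\Z$, Smith normal form gives $B=UDV$ with $U,V$ unimodular and $D=\operatorname{diag}(d_1,\dots,d_\rho)$. Reducing mod $q$ shows that exactly $\rho-k$ of the $d_j$ are divisible by $q$. Hence $q^{\rho-k}\mid\det B$, and since $\det B\ne0$ we get $q^{\rho-k}\le|\det B|$.

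This alone is too weak, because we need a bound controlled by the row norms rather than by the whole determinant. We refine it as follows. Choose $k$ rows of $B$ that are independent mod $q$. Every other row $b$ lies in their $\F_q$-span mod $q$. Pick a $k\times k$ minor that is nonsingular mod $q$, on rows $R_0$ and columns $C_0$. The matrix on rows $R_0$ together with $\rho-k$ further rows, and on all $\rho$ columns of $B$, is $B$ itself up to row order. We perform integer-friendly elimination by considering the $(k+1)\times(k+1)$ minors that border the chosen minor. Each such minor vanishes mod $q$, but it need not vanish over $\R$.

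The main step, and the expected obstacle, is the following alternative combination. Each of the $\rho-k$ remaining rows can be reduced against the $k$ chosen rows, using the adjugate of the $k\times k$ minor. This changes $\det B$ only by the factor $\det(\text{minor})^{\rho-k-1}$. After the reduction every new row is divisible by $q$. Hadamard's inequality, combined with the given row norms, then yields $\rho-k\le k$, i.e. $\rho\le 2k$.

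A more robust way to run this step is to compare volumes directly. Let $\Lambda$ be the lattice spanned by the rows of $B$. Its sublattice $\Lambda_q=\{x\in\Lambda: x\equiv\text{span of }k\text{ rows}\}$ has index at most $q^{k}$ in $\Lambda$. The sublattice $\Lambda\cap q\Z^\rho$ has determinant at least $q^{\rho}$. Therefore $q^{\rho-k}\le\det\Lambda\le r^{\rho/2}<q^{\rho/2}$, using $r<q$. It follows that $\rho-k<\rho/2$, which is exactly the claim $\rank_{\F_q}M\ge\tfrac12\rank_\R M$.

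To summarize the plan: the Smith normal form and Hadamard steps are routine. The delicate step, which I would check carefully, is the volume-index comparison, namely that $\det\Lambda\ge q^{\rho-k}$. This follows from $\Lambda/(\Lambda\cap q\Z^\rho)\hookrightarrow\F_q^\rho$ having image of dimension $k$. Then $[\Z^\rho:\Lambda]\cdot q^{k}\ge[\Z^\rho:\Lambda\cap q\Z^\rho]\ge q^{\rho}$, so $[\Z^\rho:\Lambda]\ge q^{\rho-k}$.
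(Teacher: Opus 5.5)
Your proof is correct and follows the paper's route: take a nonsingular $\rho\times\rho$ minor $B$, show that $q^{\rho-k}$ divides $\det B$, and combine this with Hadamard to get $q^{\rho-k}\le r^{\rho/2}<q^{\rho/2}$. The paper gets the divisibility by subtracting integer lifts of mod-$q$ combinations of $k$ basis rows, which preserves $\det B$ and makes $\rho-k$ rows divisible by $q$; your Smith normal form argument, or your index computation $[\Lambda:\Lambda\cap q\Z^\rho]=q^k$, does the same job.

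Your write-up does contain a mistaken remark and some dead weight. The Smith normal form step together with the Hadamard bound from your first paragraph already completes the proof, so your claim that it is ``too weak'' is wrong. The adjugate paragraph can be deleted; it would also need the chosen $k\times k$ minor to have determinant prime to $q$ in order to transfer divisibility back to $\det B$. The set $\Lambda_q$ can be deleted too, since as written it is all of $\Lambda$.
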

\begin{proof}
Set $d=\rank_\R M$ and choose a nonsingular $d\times d$ minor $B$ if $d>0$.
By Hadamard's inequality, we have $1\le|\det B|\le r^{d/2}$.
Let $d'=\rank_{\F_q}B$ and choose $d'$ rows forming a basis modulo $q$.
Subtract integer lifts of their linear combinations from the remaining rows.
These row operations preserve the determinant and make $d-d'$ rows
divisible by $q$. Hence $q^{d-d'}$ divides $\det B$, so
\[
 q^{d-d'}\le r^{d/2}.
\]
Since $q>r$, this implies $d'\ge d/2$. If $d=0$, there is nothing to prove.
\end{proof}

\begin{lemma}\label{lem:lower}
For $i\in J$, let $Y_i=(A(i,u,v))_{u,v\in[r]}$, an $rn\times rn$
integer block matrix. For every $i\in I$,
\begin{equation}\label{eq:lower}
 \rank_{\F_q}Y_i\ge
 \frac{r^2n}{2\bigl(2r-1+(r-1)^2\lambda_i\bigr)}
 \ge n\left(\frac r4-\frac{r^2\lambda_i}{8}\right).
\end{equation}
\end{lemma}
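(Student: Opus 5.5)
The plan is to bound the real rank of $Y_i$ by the standard trace inequality for positive semidefinite matrices, and then pass to $\F_q$ using Lemma~\ref{lem:reduction}. Put $M=Y_iY_i^{\mathsf T}$, which is real symmetric positive semidefinite with $\rank_\R M=\rank_\R Y_i$. By Cauchy--Schwarz applied to its nonzero eigenvalues,
\[
 \rank_\R M\ge\frac{(\tr M)^2}{\tr(M^2)}.
\]
So it suffices to compute both traces in terms of fixed points.

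First, the $(u,u)$ diagonal block of $M$ is $\sum_v A(i,u,v)A(i,u,v)^{\mathsf T}=r\cdot\mathrm{I}_n$, so $\tr M=r^2n$. Next, write $A(u,v)$ for $A(i,u,v)$. Expanding $\tr(M^2)=\tr(Y_iY_i^{\mathsf T}Y_iY_i^{\mathsf T})$ blockwise gives
\[
 \tr(M^2)=\sum_{u,u',v,v'}\tr\bigl(A(u,v)A(u',v)^{\mathsf T}A(u',v')A(u,v')^{\mathsf T}\bigr).
\]
Since transposes are inverses, each summand is $\tr P_{Q}=n\fix(Q)$, where $Q$ is the word \eqref{eq:rectangle} (extended to all index tuples).

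I then split the tuples into two classes:
\begin{itemize}
\item If $u=u'$ or $v=v'$, the word collapses to the identity, contributing exactly $n$. There are $r^4-r^2(r-1)^2=r^2(2r-1)$ such tuples.
\item The remaining $r^2(r-1)^2$ tuples contribute $n\,r^2(r-1)^2\lambda_i$ in total, by the definition of $\lambda_i$.
\end{itemize}
Hence $\tr(M^2)=nr^2\bigl(2r-1+(r-1)^2\lambda_i\bigr)$, and therefore
\[
 \rank_\R Y_i\ge\frac{r^2n}{2r-1+(r-1)^2\lambda_i}.
\]

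To reduce modulo $q$, observe that each row of $Y_i$ meets $r$ blocks in a fixed block row. Each block is a permutation matrix, so the row has exactly $r$ entries equal to $1$ and the rest $0$, giving row norm $\sqrt r$. Since $q>r$, Lemma~\ref{lem:reduction} applies and loses a factor of $2$, which yields the first inequality in \eqref{eq:lower}.

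For the second inequality, set $X=2r-1+(r-1)^2\lambda_i\le 2r+r^2\lambda_i$. Use the tangent-line bound $1/X\ge\frac{1}{2r}\bigl(2-\frac{X}{2r}\bigr)$, which holds by convexity of $1/X$ (tangent at $X=2r$). This gives
\[
 \frac{r^2}{2X}\ge\frac r2-\frac X8\ge\frac r2-\frac r4-\frac{r^2\lambda_i}{8}=\frac r4-\frac{r^2\lambda_i}{8}.
\]

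The only delicate point is the bookkeeping in the expansion of $\tr(M^2)$. One has to check that the index pattern matches $Q_i(u,u',v,v')$ exactly, and that the degenerate tuples give the identity rather than merely some permutation. Everything else is routine.
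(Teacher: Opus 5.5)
Your proof is correct, and it differs from the paper's in one step: how the real-rank bound is obtained.

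The paper quotes \cite[Theorem 6]{AHKPY}, $\rank_\R A\ge e^2/(\sum_x d_x^2-e+4c_4)$. It then does graph-theoretic bookkeeping for the bipartite graph of $Y_i$:
\begin{itemize}
\item all degrees equal $r$ and $e=r^2n$;
\item every four-cycle uses two distinct row blocks and two distinct column blocks;
\item each unoriented four-cycle is counted four times by the fixed points of the words $Q_i(u,u',v,v')$.
\end{itemize}
You instead prove the bound directly from $\rank M\ge(\tr M)^2/\tr(M^2)$ for $M=Y_iY_i^{\mathsf T}$. You expand $\tr(M^2)$ as $\sum_{u,u',v,v'}n\fix\bigl(Q_i(u,u',v,v')\bigr)$.

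These are the same computation in two languages. For a $0$-$1$ matrix $A$ one has $\tr(AA^{\mathsf T})=e$ and $\tr\bigl((AA^{\mathsf T})^2\bigr)=\sum_x d_x^2-e+4c_4$. Your degenerate tuples (those with $u=u'$ or $v=v'$) contribute exactly $nr^2(2r-1)=\sum_x d_x^2-e$. Your nondegenerate tuples contribute $nr^2(r-1)^2\lambda_i=4c_4$.

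What your version buys is self-containedness. The bookkeeping also becomes automatic: you never need to argue which four-cycles can occur or how often each is counted. The paper's version is shorter and ties the estimate to the literature.

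The remaining steps match the paper. You reduce modulo $q$ via Lemma~\ref{lem:reduction}, using row norms $\sqrt r$ and $q>r$. Your tangent-line bound for $1/X$ at $X=2r$ is equivalent to the paper's use of $(1+t)^{-1}\ge1-t$ after bounding $2r-1+(r-1)^2\lambda_i\le 2r+r^2\lambda_i$.
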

\begin{proof}
We apply \cite[Theorem 6]{AHKPY}. For a nonzero $0$-$1$ matrix $A$,
this theorem gives the following estimate in terms of the bipartite graph
whose vertices are the rows and columns and whose edges correspond to
entries equal to $1$:
\begin{equation}\label{eq:publishedrank}
 \rank_\R A\ge\frac{e^2}{\sum_x d_x^2-e+4c_4}.
\end{equation}
Here $e$ is the number of edges, $d_x$ the vertex degrees, and $c_4$ the
number of unoriented four-cycles.

For $A=Y_i$, each of the $2rn$ vertices has degree $r$ and $e=r^2n$.
Every four-cycle uses two different row blocks and two different column
blocks: two rows of one permutation block cannot share a column, and
the analogous assertion holds for columns. For each ordered choice of
these four blocks, the cycles are counted by the fixed points of
\eqref{eq:rectangle}. Each unoriented cycle is counted four times, giving
$
 4c_4=nr^2(r-1)^2\lambda_i.
$
By \eqref{eq:publishedrank}, we get
\[
 \rank_\R Y_i\ge\frac{r^2n}{2r-1+(r-1)^2\lambda_i}.
\]
Each row of $Y_i$ has exactly $r$ entries equal to $1$, so
Lemma~\ref{lem:reduction} proves the first bound in \eqref{eq:lower}.

For the second inequality, use $(1+t)^{-1}\ge1-t$ for $t\ge0$ to obtain
\[
 \frac{r^2}{2\bigl(2r-1+(r-1)^2\lambda\bigr)}
 \ge\frac{r}{4(1+r\lambda/2)}
 \ge\frac r4-\frac{r^2\lambda}{8}\qquad(\lambda\ge0).
\]
\end{proof}

The following lemma extends the exact factorization and rank bounds in
\cite[equations (2.11)--(2.12)]{OpenAI} to permutation assignments with
triangular defects.

\begin{lemma}\label{lem:upper}
Over $\F_q$, let $Z_x=\sum_{i\in J(x)}Y_i$, an $rn\times rn$ matrix.
For every $x\in\F_q^5$,
\begin{equation}\label{eq:upper}
 \rank Z_x\le n+n\sum_{\substack{i\in J(x)\\u,v\in[r]}}\delta_{x,i,u,v}.
\end{equation}
\end{lemma}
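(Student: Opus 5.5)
We compare $Z_x$ with a matrix of rank at most $n$, built from the $L$- and $R$-permutations at $x$, and charge the discrepancy to the defects of the triangles through $x$.

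\textbf{Reindexing.} Fix $x$. By the bijection \eqref{eq:block}, summing over $i\in J(x)$ is the same as summing over pairs $(j,k)\in(\Z/s_x\Z)\times(\Z/t_x\Z)$. Write $i(j,k)$ for the line with $(\alpha_x(i),\beta_x(i))=(j,k)$.

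\textbf{The model block matrix.} For $u,v\in[r]$, define the ideal block
\[
 B(u,v)=\sum_{j,k}L(x,u,j+v)^{-1}R(x,v,k+u),
\]
the sum being over all $(j,k)$. As $j$ runs over $\Z/s_x\Z$, so does $j+v$; similarly for $k+u$. Hence
\[
 B(u,v)=\Bigl(\sum_{j}L(x,u,j)\Bigr)^{\mathsf T}\Bigl(\sum_{k}R(x,v,k)\Bigr)=\Lambda_u^{\mathsf T}P_v,
\]
where $\Lambda_u=\sum_jL(x,u,j)$ and $P_v=\sum_kR(x,v,k)$ are $n\times n$ matrices. Here we used that inverses of permutation matrices are transposes. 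The block matrix $(B(u,v))_{u,v}$ then factors as the column $(\Lambda_u^{\mathsf T})_u$, of size $rn\times n$, times the row $(P_v)_v$, of size $n\times rn$. Therefore its rank is at most $n$ over any field, in particular over $\F_q$.

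\textbf{Error estimate.} By \eqref{eq:face}, the triangle $\tau=(x,i,u,v)$ says that the ideal value of $A(i,u,v)$ is $M_\tau:=L(x,u,\alpha_x(i)+v)^{-1}R(x,v,\beta_x(i)+u)$. Put $D_\tau=A(i,u,v)-M_\tau$. Then $Z_x-B$ is the block matrix whose $(u,v)$ block is $\sum_{i\in J(x)}D_{x,i,u,v}$.

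To bound the rank of $D_\tau$, note that $A$ and $M_\tau$ are permutation matrices. Where they agree as permutations, the corresponding columns of $D_\tau$ vanish. Hence $\rank D_\tau$ is at most the number of points where $A(i,u,v)\ne M_\tau$. That count equals the number of $a$ moved by $A M_\tau^{-1}$, which is a conjugate of (equivalently, has the same fixed-point count as) the boundary word $L A R^{-1}$. Since $\dH$ is conjugation invariant, $\rank D_\tau\le n\delta_\tau$.

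Embed each $D_\tau$ into the $(u,v)$ block position of an $rn\times rn$ matrix; this does not change its rank. By subadditivity of rank,
\[
 \rank(Z_x-B)\le n\sum_{i\in J(x),\,u,v}\delta_{x,i,u,v}.
\]
Finally,
\[
 \rank Z_x\le\rank B+\rank(Z_x-B),
\]
which gives \eqref{eq:upper}.

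\textbf{Main obstacle.} The delicate step is checking that the index shifts in \eqref{eq:face} are exactly the ones that make $B$ separable. The $L$-index $\alpha_x(i)+v$ depends on $v$ only through a shift in $\Z/s_x\Z$, and the $R$-index $\beta_x(i)+u$ depends on $u$ only through a shift in $\Z/t_x\Z$. Because the bijection is onto the full product group, the sums reindex to $\Lambda_u$ and $P_v$. We also need the conjugation bookkeeping relating $\dH(A,M_\tau)$ to $\delta_\tau$, using multiplication in path order.
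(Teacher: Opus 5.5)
Your proof is correct and is essentially the paper's argument: you replace each block $A(i,u,v)$ of $Z_x$ by $L(x,u,\alpha_x(i)+v)^{-1}R(x,v,\beta_x(i)+u)$ and use the bijection \eqref{eq:block} to factor the resulting block matrix as the column $(\sum_j L(x,u,j)^{-1})_u$ times the row $(\sum_k R(x,v,k))_v$; you then charge each replacement to $n\delta_{x,i,u,v}$ via the nonzero-column count, bi-invariance of $\dH$, and subadditivity of rank. The one implicit convention is that permutation products are matrix products under $P_\sigma e_a=e_{\sigma(a)}$, so that $M_\tau$ is simultaneously the permutation in the defect bound and the matrix in the factorization; you use this consistently with the paper.
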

\begin{proof}
Permutation matrices $P,Q$ differing on $k$ points have at most $k$
nonzero columns in $P-Q$. Thus $\rank(P-Q)\le n\dH(P,Q)$ over every
field. Bi-invariance gives
\[
 \rank\bigl(A(i,u,v)-L(x,u,\alpha_x(i)+v)^{-1}
 R(x,v,\beta_x(i)+u)\bigr)\le n\delta_{x,i,u,v}.
\]
Fix a point $x$. In $Z_x$, replace each permutation matrix by the
corresponding product in the preceding display. For fixed $u,v$, the
pairs $(\alpha_x(i)+v,\beta_x(i)+u)$, with $i\in J(x)$, traverse the
whole Cartesian product once. The resulting matrix is
\begin{equation}\label{eq:factor}
 \begin{pmatrix}
  \sum_j L(x,0,j)^{-1}\\[-1mm]
  \vdots\\[-1mm]
  \sum_j L(x,r-1,j)^{-1}
 \end{pmatrix}
 \begin{pmatrix}
  \sum_k R(x,0,k)&\cdots&\sum_k R(x,r-1,k)
 \end{pmatrix}.
\end{equation}
Both factors have rank at most $n$, so their product has rank at most $n$.
Adding the difference in one matrix block increases the rank by at most
the rank of that difference. Subadditivity of rank now gives
\eqref{eq:upper}.
\end{proof}

\begin{proof}[Proof of Theorem~\ref{thm:obstruction}]
Recall that $H_x\in M_I(\F_q)$ and $Y_i,Z_x\in M_{rn}(\F_q)$.
We use the identifications
\[
 M_I(\F_q)\otimes_{\F_q}M_{rn}(\F_q)
 \cong M_I\!\bigl(M_{rn}(\F_q)\bigr)
 \cong M_{s^5rn}(\F_q).
\]
Thus the matrices have $s^5\times s^5$ blocks, each of size $rn\times rn$.
The symbol $\otimes$ denotes the Kronecker product: the $(j,k)$ block of
$H_x\otimes Z_x$ is $(H_x)_{jk}Z_x$.
We write $\bigoplus_{i\in I}Y_i$ for the block diagonal matrix with
diagonal blocks $Y_i$ and zero off-diagonal blocks.
From \eqref{eq:certificate}, we obtain the tensor identity of
\cite[equation (2.14)]{OpenAI}:
\begin{equation}\label{eq:tensor}
 -\bigoplus_{i\in I}Y_i=\sum_{x\in\F_q^5}H_x\otimes Z_x.
\end{equation}
Indeed, expanding $Z_x$ and changing the order of summation makes the
coefficient of $Y_i$ equal to $-E_{ii}$ for $i\in I$, and zero for
$i\in J\setminus I$.
The rank on the left of \eqref{eq:tensor} is the sum of the individual
ranks. Since $\rank H_x\le1$ by Lemma~\ref{lem:interpolation}, each
summand on the right satisfies
$\rank(H_x\otimes Z_x)=\rank H_x\,\rank Z_x\le\rank Z_x$.
We apply Lemma~\ref{lem:upper} at every point $x$. Each triangular face
$\tau=(x,i,u,v)$ has a unique associated point, so its defect is counted
exactly once. Rank subadditivity therefore gives
\[
 \begin{aligned}
 \sum_{i\in I}\rank_{\F_q}Y_i
 &\le\sum_{x\in\F_q^5}\rank_{\F_q}Z_x \le\sum_{x\in\F_q^5}
 \left(n+n\sum_{\substack{i\in J(x)\\u,v\in[r]}}\delta_{x,i,u,v}\right)
 =n\left(q^5+\sum_\tau\delta_\tau\right).
 \end{aligned}
\]
Using the second bound of Lemma~\ref{lem:lower} and cancelling the positive
real number $n$, we obtain
\[
 \frac{rs^5}{4}-q^5
 \le\sum_\tau\delta_\tau+\frac{r^2}{8}\sum_{i\in I}\lambda_i.
\]
Since $q<10^{23}$ and $rs^5=59049\cdot10^{111}>5\cdot10^{115}$,
we have $rs^5/4-q^5>q^5/4>q^5/5$. This proves \eqref{eq:linear}.
\end{proof}

\section{The negatively curved complex}\label{sec:geometry}

We equip every face of the complex $K$ from Section~\ref{sec:rank}
with the hyperbolic triangle of curvature $-1$ with angles
\begin{equation}\label{eq:angles}
 \theta_O=\pi/2,\qquad \theta_V=\theta_W=\pi/5.
\end{equation}
The angle sum is $9\pi/10<\pi$, so this triangle exists. Assign to all
$L$- and $R$-edges the equal side lengths opposite $\theta_W$ and
$\theta_V$, respectively, and to all $A$-edges the side length opposite
$\theta_O$. This includes edges contained in no face. All gluings are
therefore isometric.

An edge germ at a vertex is an incident edge together with that endpoint.
The angular link has one vertex for each edge germ and one edge for
each face corner. The length of a link edge is the angle
at the corresponding corner. A path in a graph is reduced if it has no
immediate edge reversal. A reduced closed path is cyclically reduced if the
last edge is not the inverse of the first edge. We set the distance between
directions in different link components equal to infinity.

We adapt the link argument of \cite[Lemma 2.3]{OpenAI} to the present
girth bound and angles.

\begin{lemma}\label{lem:links}
Every nonempty cyclically reduced closed path in any vertex link has
length at least $2\pi$. At either $V$ or $W$, two different $A$-directions
with the same index $i$ have distance at least $2\pi$, or belong to
different components.
\end{lemma}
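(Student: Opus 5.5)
The plan is to treat the three vertex links separately, using the explicit face corners of \eqref{eq:face}. At $O$ the corners give a bipartite graph with short edges, and only the absence of multiple edges is needed. At $V$ and $W$ the corners have length $\pi/5$, so we need ten link edges, and we get them from the girth bound for $\Gamma$ through a combinatorial projection.

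\emph{Link at $O$.} Its vertices are the germs of the $L$-edges $L_{x,u,j}$ and the $R$-edges $R_{x,v,k}$. The face $\tau=(x,i,u,v)$ contributes a link edge of length $\pi/2$ joining $L_{x,u,\alpha_x(i)+v}$ to $R_{x,v,\beta_x(i)+u}$. The link is bipartite ($L$-germs versus $R$-germs), so every cyclically reduced closed path has even length. It remains to exclude length $2$, that is, double edges.
\begin{itemize}
\item If two faces join $L_{x,u,j}$ and $R_{x',v,k}$, the germs force $x=x'$ and fix $u,v,j,k$.
\item Then $\alpha_x(i)=j-v$ and $\beta_x(i)=k-u$, and the bijection \eqref{eq:block} recovers $i$.
\end{itemize}
Hence the link is a simple bipartite graph, every cyclically reduced cycle has at least $4$ edges, and its length is at least $4\cdot\pi/2=2\pi$.

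\emph{Links at $V$ and $W$.} We treat $V$; the case of $W$ is the same with $R$ and $\beta$ in place of $L$ and $\alpha$. The vertices are germs of $L_{x,u,j}$ and of $A_{i,u,v}$. The face $(x,i,u,v)$ contributes an edge of length $\pi/5$ from $L_{x,u,\alpha_x(i)+v}$ to $A_{i,u,v}$.

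Define a graph map $\pi$ from the link to $\Gamma$ by
\[
 L_{x,u,j}\mapsto x,\qquad A_{i,u,v}\mapsto i,\qquad \text{corner of }(x,i,u,v)\mapsto \text{edge } x\text{---}i,
\]
which is well defined since $i\in J(x)$. The key step is that $\pi$ is locally injective on edges, so reduced paths map to reduced paths.
\begin{itemize}
\item At the vertex $A_{i,u,v}$, an incident corner is determined by its point $x$, because the face is $(x,i,u,v)$.
\item At the vertex $L_{x,u,j}$, an incident corner is determined by its line $i$, because $v=j-\alpha_x(i)$.
\end{itemize}
Thus a cyclically reduced closed path in the link maps to a nonempty cyclically reduced closed walk in $\Gamma$. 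Such a walk cannot live in a forest, so it has length at least the girth, which is at least ten. Its link length is therefore at least $10\cdot\pi/5=2\pi$.

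\emph{Second assertion.} Take distinct directions $A_{i,u,v}$ and $A_{i,u',v'}$ in the same component. The distance between them is realized by a reduced link path (the link is a finite metric graph), and this path is nonempty because the endpoints differ. Its image under $\pi$ is a nonempty reduced closed walk at $i$ in $\Gamma$. Again it must contain a cycle, so it has at least ten edges, and the distance is at least $2\pi$.

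Edges lying in no face only add isolated link vertices and do not affect either claim. The main point requiring care is the local injectivity of $\pi$ at both kinds of link vertices, which is where the Cartesian-product indexing \eqref{eq:block} is used; everything else is bookkeeping.
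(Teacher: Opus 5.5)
Your proof is correct and follows the paper's argument. The link at $O$ is a simple bipartite graph with corners of angle $\pi/2$, and the links at $V$ and $W$ immerse into $\Gamma$, so the girth bound gives at least ten corners of angle $\pi/5$ for both assertions. One detail is missing: $v=j-\alpha_x(i)$ holds only in $\Z/s_x\Z$, so recovering a unique $v\in[r]$ uses $s_x\ge r$, and at $W$ recovering $u$ from $u=k-\beta_x(i)$ in $\Z/t_x\Z$ uses $t_x\ge r$; otherwise $A_{i,u,v}$ and $A_{i,u,v+s_x}$ would be joined by a link path of length $2\pi/5$.
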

\begin{proof}
At $O$, the link is simple and bipartite: a corner between
$L_{x,u,j}$ and $R_{x,v,k}$ determines $i\in J(x)$ uniquely by
$\alpha_x(i)=j-v$ and $\beta_x(i)=k-u$.
Every nonempty cyclically reduced closed path therefore has at least
four edges and length at least $4\theta_O=2\pi$.

We map the link at $V$ to $\Gamma$ by sending the direction $A_{i,u,v}$
to the line vertex $i$ and the direction $L_{x,u,j}$ to the point $x$.
Each corner maps to the corresponding incidence edge. We claim that
this is an immersion, that is, the map is injective on incident edges
at every vertex. Indeed, at $A_{i,u,v}$, a fixed point $z$ determines at
most one corner, namely that of the face indexed by $(z,i,u,v)$ if $i\in J(z)$.
At $L_{x,u,j}$, a fixed index $i$ determines
$v=j-\alpha_x(i)$ modulo $s_x$. There is at most one such $v\in[r]$,
since $s_x\ge r$. At $W$, the same argument uses
$u=k-\beta_x(i)$ modulo $t_x$ and $t_x\ge r$.

Under these immersions, both a nonempty cyclically reduced closed link
path and a reduced path joining distinct $A$-directions with the same
index $i$ map to nonempty closed walks in $\Gamma$ with no immediate
reversal at an internal vertex. Every such walk contains a simple cycle
and hence has at least ten edges. Since link edges at $V$ and $W$ have
length $\pi/5$, both assertions follow.
\end{proof}

\begin{proposition}\label{prop:cat}
The universal cover $\widetilde K$ is a proper $\CAT(-1)$ space. The
group $G=\pi_1(K,O)$ is torsion-free and word-hyperbolic, and $K$ is a
finite two-dimensional classifying space for $G$.
\end{proposition}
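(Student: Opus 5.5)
The plan is to apply the link condition for $M_\kappa$-polyhedral complexes, Bridson--Haefliger \cite[II.5.2 and II.5.6]{BH}, with $\kappa=-1$. First I would check that $K$ is a finite $M_{-1}$-polyhedral complex with finitely many shapes. Each closed face is an embedded copy of one fixed hyperbolic triangle with angles \eqref{eq:angles}. Every $L$-edge, every $R$-edge and every $A$-edge has one fixed length, and this includes edges lying in no face. So all face gluings along edges are isometries, and there is a single isometry type of cell. By Bridson's theorem \cite[I.7.19]{BH}, the induced length metric on $K$ is a complete geodesic metric. Its pullback to the universal cover $\widetilde K$ is again a $M_{-1}$-polyhedral complex with finitely many shapes, hence complete and geodesic. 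It is also locally finite, since $K$ is finite, and so it is proper.

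Next I would verify the local $\CAT(-1)$ condition. At interior points of faces this is automatic. At interior points of edges the link is a discrete set of points, possibly one point or empty for free edges, with mutual distances $\pi$ or $\infty$. Such a link has no closed geodesic at all, so the condition holds. At the vertices $O,V,W$ the link is exactly the metric graph described before Lemma~\ref{lem:links}. For a metric graph, being $\CAT(1)$ is equivalent to every nonempty cyclically reduced closed path (an injective loop) having length at least $2\pi$. This is precisely the first assertion of Lemma~\ref{lem:links}, so the link condition holds at every vertex. Hence $K$ is locally $\CAT(-1)$. The same holds for $\widetilde K$, whose vertex links are isometric to those of $K$. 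Since $\widetilde K$ is simply connected and complete, the Cartan--Hadamard theorem \cite[II.4.1]{BH} shows that $\widetilde K$ is $\CAT(-1)$, and in particular contractible.

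The group-theoretic conclusions then follow by standard results. $G$ acts on $\widetilde K$ by deck transformations, properly and cocompactly by isometries, since $K$ is compact. By \v{S}varc--Milnor, $G$ is quasi-isometric to $\widetilde K$. A $\CAT(-1)$ space is Gromov hyperbolic \cite[III.H.1.2]{BH}, so $G$ is word-hyperbolic. Because $\widetilde K$ is contractible, $K$ is a $K(G,1)$, and it is a finite two-dimensional one. Finally, a group with a finite-dimensional classifying space has finite cohomological dimension, so it is torsion-free. Alternatively, any finite subgroup acting on a complete $\CAT(0)$ space fixes a point by \cite[II.2.8]{BH}, which is impossible for a nontrivial subgroup acting freely.

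The main obstacle is the link step, and specifically matching the definitions. The link of a vertex in $K$ is a graph that may have multiple edges between the same pair of vertices, and a face may contribute a corner at a vertex more than once. I need the $\CAT(1)$ criterion for metric graphs, girth at least $2\pi$, to apply in that generality. I would handle this by noting that a closed geodesic of length less than $2\pi$ in a metric graph is a cyclically reduced closed edge path. Lemma~\ref{lem:links} already rules these out, including bigons from parallel edges.
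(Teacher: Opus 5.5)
Your proposal is correct and follows essentially the same route as the paper: vertex links via Lemma~\ref{lem:links} and the metric-graph criterion, completeness and properness from finitely many shapes and local finiteness, Cartan--Hadamard, \v{S}varc--Milnor, and the fixed-point theorem for torsion-freeness. The one structural difference is that the paper first subdivides $K$ so that cells meet in single faces, which fails in $K$ because every two faces share all of $O,V,W$; you instead work directly with Bridson--Haefliger's $M_{-1}$-polyhedral complexes, which do not require this. One small correction: the link at an interior point of an edge is not a discrete set but a union of arcs of length $\pi$ joining the two edge directions. That link is still $\CAT(1)$, and in any case it need not be checked, since \cite[Theorem II.5.2]{BH} only involves vertex links.
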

\begin{proof}
We first subdivide the complex so that cells meet in single faces.
Subdivide each edge at its midpoint and each face by joining an interior
point to its vertices and edge midpoints by geodesics. Since the closed
cells embed and each edge and face has a distinct new vertex, the
subdivision is a finite simplicial complex with the same intrinsic
metric. The angular link at a new face-interior vertex is a circle of
length $2\pi$. At an edge-interior vertex it consists of length-$\pi$
paths between the two edge directions.

The original angular links have the same metric after subdivision.
By Lemma~\ref{lem:links}, their injective circuits have length at least
$2\pi$, and all circuits in the new links have length at least
$2\pi$. It follows from the metric graph criterion
\cite[Lemma II.5.6]{BH} and the link condition
\cite[Theorem II.5.2]{BH} that $K$ is locally $\CAT(-1)$.

The lifted complex is locally finite and has finitely many shapes.
Its length metric is complete by \cite[Theorem I.7.13]{BH}. By local
compactness and \cite[Proposition I.3.7]{BH}, it is proper and geodesic.
The Cartan--Hadamard theorem \cite[Theorem II.4.1]{BH} now implies that the universal cover
$\widetilde K$ is globally $\CAT(-1)$. Deck transformations act freely,
properly and cocompactly by isometries. A finite group of isometries of
a complete $\CAT(0)$ space fixes a point
\cite[Corollary II.2.8]{BH}. Since the deck action is free, no
nonidentity deck transformation has finite order. The space is
hyperbolic \cite[Chapter III.H]{BH}, so the \v{S}varc--Milnor lemma
\cite[Proposition I.8.19]{BH} implies that $G$ is word-hyperbolic.
Since $\widetilde K$ is contractible, $K$ is a classifying space.
\end{proof}

It remains to show that the rectangular words studied before are all non-trivial in $G$. This is the content of the following lemma.
We adapt the argument following \cite[equation (2.6)]{OpenAI} to the
$\CAT(-1)$ metric constructed above.

\begin{lemma}\label{lem:nontrivial}
For $i\in I$, $u\ne u'$ and $v\ne v'$, the element
$w_{i;u,u',v,v'}$
is nonidentity in $G$.
\end{lemma}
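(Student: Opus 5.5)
We argue geometrically in the universal cover $\widetilde K$, which is $\CAT(-1)$ by Proposition~\ref{prop:cat}. The word $w_{i;u,u',v,v'}$ is represented by the closed edge path $\gamma=A_{i,u,v}A_{i,u',v}^{-1}A_{i,u',v'}A_{i,u,v'}^{-1}$ based at $V$, conjugated by the tree path $c_V$. It suffices to show that $\gamma$ is not nullhomotopic. Our strategy is to show that $\gamma$ is a \emph{local geodesic} in $K$. In a $\CAT(0)$ (a fortiori $\CAT(-1)$) space, local geodesics are geodesics, by \cite[Proposition II.1.4]{BH}. A lift of $\gamma$ to $\widetilde K$ is therefore a geodesic of positive length, so its endpoints are distinct, and the loop is nontrivial in $G$.

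Each $A$-edge is a geodesic segment, since it is a side of a hyperbolic triangle and the metric is the induced path metric. Local geodesicity therefore only needs to be checked at the four concatenation points. These are alternately the vertex $W$ (between $A_{i,u,v}$ and $A_{i,u',v}^{-1}$, and between $A_{i,u',v'}$ and $A_{i,u,v'}^{-1}$) and the vertex $V$ (between $A_{i,u',v}^{-1}$ and $A_{i,u',v'}$, and cyclically between $A_{i,u,v'}^{-1}$ and $A_{i,u,v}$). By \cite[Lemma I.5.?/II.5]{BH}, that is, the standard criterion for piecewise hyperbolic complexes, a concatenation of geodesic segments at a vertex is locally geodesic exactly when the incoming and outgoing directions have angular distance at least $\pi$ in the link. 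At $W$ the two directions are the $A$-directions $A_{i,u,v}$ and $A_{i,u',v}$. They carry the same line index $i$, and they are distinct because $u\ne u'$. At $V$ the directions are $A_{i,u',v}$ and $A_{i,u',v'}$, which are distinct because $v\ne v'$. The second assertion of Lemma~\ref{lem:links} says these directions have link distance at least $2\pi\ge\pi$, or lie in different components (distance $\infty$). Hence every corner has angle at least $\pi$.

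It remains to record the details that make this rigorous. The angular link distance coincides with the Alexandrov angle truncated at $\pi$, via the link condition used in Proposition~\ref{prop:cat} after subdivision; the subdivision does not change the vertex links at $V$ and $W$. A path that is locally geodesic away from the vertices and makes angle $\pi$ at each vertex is a local geodesic \cite[Chapter I.7]{BH}. The lifted path is thus a local geodesic, hence a geodesic in $\widetilde K$, and it is nonconstant. Its endpoints therefore differ, so the deck transformation corresponding to $w_{i;u,u',v,v'}$ is nontrivial. Conjugating by $c_V$ does not affect nontriviality.

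The main obstacle is the angle criterion at the vertices: identifying link distance in the angular link with the angle of the path in $\widetilde K$, and checking that the relevant directions are exactly those compared in Lemma~\ref{lem:links}. Orientation needs care here. At $W$ the path arrives along $A_{i,u,v}$ and leaves along $A_{i,u',v}$ reversed, so both germs are $A$-germs at their $W$-endpoint; similarly at $V$. Once this bookkeeping is done, everything else is a direct application of Lemma~\ref{lem:links} and the $\CAT(-1)$ structure.
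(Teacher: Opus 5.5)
Your proposal is correct and follows essentially the same route as the paper: you check that each of the four turns of the loop (two at $W$, two at $V$, including the closing one) joins distinct $A$-directions with the same index $i$, apply Lemma~\ref{lem:links} to get link distance at least $\pi$, conclude that the loop is a positive-length local geodesic, and then use that local geodesics in the $\CAT(-1)$ universal cover are geodesics. One small imprecision: along each $A$-edge you only need local geodesicity at interior points, which comes from the link at an edge-interior point; it does not follow merely from the edge being a side of a hyperbolic triangle, and you do not need the edge to be a geodesic in $K$.
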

\begin{proof}
The edge path
$
 p=A_{i,u,v}A_{i,u',v}^{-1}A_{i,u',v'}A_{i,u,v'}^{-1}
$
is a loop based at $V$, and $w_{i;u,u',v,v'}$ is the class of
the loop $c_Vpc_V^{-1}$ based at $O$.
At every turn of $p$, including the closing turn, the two
$A$-directions are distinct and have the same index $i$. By
Lemma~\ref{lem:links}, their link distance is greater than $\pi$.
It follows that $p$ is a positive-length closed local geodesic, by the
link description of angles \cite[Chapter I.7]{BH}. If this loop were
nullhomotopic, it would lift to a positive-length closed local geodesic
in $\widetilde K$. Every local geodesic in a $\CAT(-1)$ space is a
geodesic \cite[Proposition II.1.4]{BH}. This is impossible for a
positive-length closed path.
\end{proof}

\section{Proof of the main result}\label{sec:sofic}

Let $H$ be a sofic group and let $\eta>0$. Given finitely many words in
a finite alphabet of elements of $H$ and their inverses, choose a finite
set $F\subset H$ containing the alphabet, its inverses, all word prefixes,
and $1$. Let $\phi:F\to\Sym(n)$ satisfy the inequalities in the definition
of soficity from the introduction.
For a word of length $\ell\ge1$ with $k$ inverse letters, evaluate it
using the assigned permutations of the positive letters. Its distance
from the image under $\phi$ of the corresponding group element is at most $(\ell-1+k)\eta$.
Indeed, each inverse substitution changes the evaluated permutation
by less than $\eta$, since $\phi(a)\phi(a^{-1})$ is within $\eta$ of
$\id$. Each successive prefix multiplication introduces an error of
less than $\eta$. The asserted bound follows from bi-invariance and
the triangle inequality.

\begin{proof}[Proof of Theorem~\ref{thm:main}]
The geometric assertions follow from Proposition~\ref{prop:cat}.
By Lemma~\ref{lem:nontrivial}, all rectangular elements are nonidentity.
Assume that $G$ is sofic. Apply the preceding estimate in $G$
to all triangular and rectangular words, with the elements
$g_e$ as the alphabet. Assign $\phi(g_e)$ to each edge symbol.
Each triangular word has length three, one inverse letter, and group
value $1$. Hence $\delta_\tau<3\eta$. Each rectangular word has length
four and two inverse letters, so its evaluated permutation is within
$5\eta$ of $\phi(w)$. Since $w\ne1$, the latter permutation has
fixed-point fraction less than $\eta$. Thus the evaluated rectangle
has fixed-point fraction less than $6\eta$, and $\lambda_i<6\eta$.
These estimates also hold when some alphabet elements equal $1$.

Each point $x$ indexes $r^2|J(x)|$ triangular faces. By the
degree count in Section~\ref{sec:lines}, there are
$r^2((q-1)q^5-s^5)$ triangle indices in total. The data are fixed
independently of $n$ and $\eta$.
Applying \eqref{eq:linear} from Theorem~\ref{thm:obstruction} to the
permutations just constructed, and using the $s^5$ distinguished lines,
gives
\[
 \begin{aligned}
 \frac{q^5}{5}&\le\sum_\tau\delta_\tau+\frac{r^2}{8}\sum_{i\in I}\lambda_i\le 3r^2((q-1)q^5-s^5)\eta+\frac{r^2}{8}\cdot6s^5\eta
 \le 3r^2q^6\eta.
 \end{aligned}
\]
Choosing $0<\eta<5\cdot10^{-37}$ gives a contradiction to our choices for $q$ and $r$.
\end{proof}

\end{document}